\definecolor{gray}{gray}{0.85}
\newtheorem{Theorem}{Theorem}
\newtheorem*{conjecture}{Conjecture}
\newtheorem*{Corollary}{Corollary}
\newtheorem{Remark}{Remark}
\author{Christian Maire}
\address{FEMTO-ST Institute \\ Universit\'e Bourgogne Franche-Comt\'e, 15B Avenue des Montboucons  \\25030 Besan\c con Cedex\\ France }
\email{christian.maire@univ-fcomte.fr}
\author[Marine Rougnant]{Marine Rougnant}
\address{ Laboratoire de Math\'ematiques de Besançon\\ Universit\'e Bourgogne Franche-Comt\'e \\ UFR Sciences et Techniques \\ 16 route de Gray \\25030 Besan\c con Cedex\\ France }
\email{marine.rougnant@univ-fcomte.fr }
\title{A note on  $p$-rational fields and the abc-conjecture}
\subjclass{11R37, 11R23}
\keywords{$p$-rationals fields, $abc$-conjecture}
\thanks{The authors thank Bruno Angl\`es for pointing them the work of Ichimura. They also thank Georges Gras for constructive observations, Jean-Fran\c cois Jaulent for encouragements, useful comments and the extremely attentive reading, Gebhard B\"ockle for the exchanges concerning \cite{Boeckle-andall} and Zakariae Bouazaoui for his interest in this work. They also want to thank  the  anonymous referees for their careful works and helpful remarks. 
The authors were partially supported by the ANR project FLAIR (ANR-17-CE40-0012). CM was also supported by the EIPHI Graduate School (ANR-17-EURE-0002)}
\newcommand{\Q}{\mathbb{Q}}
\newcommand{\F}{\mathbb{F}}
\newcommand{\Z}{\mathbb{Z}}
\def\Ss{\mathbb{S}}
\def\N{{\rm N}}
\newcommand{\C}{\mathbb{C}}
\def\G{{\rm G}}
\def\deg{{\rm deg}}
\def\1{\mathds{1}}
\def\p{{\mathfrak p}}
\def\Ind{{\rm Ind}}
\def\log{{\rm log}}
\def\mod{{\rm mod}}
\def\Rad{{\rm Rad}}
\def\O{{\mathcal O}}
\def\E{{\mathcal E}}
\def\U{{\mathcal U}}
\def\T{{\mathcal T}}
\def\fq{{\mathbb F}}
\def\L{{\rm L}}
\def\K{{\rm K}}
\def\Gal{{\rm Gal}}
\def\Ind{{\rm Ind}}
\def\p{{\mathfrak p}}
\date\today
\begin{document}

\maketitle

\begin{abstract} In this short note we confirm the relation between the generalized $abc$-conjecture and the $p$-rationality of  number fields. Namely, we prove  that given  $\K/\Q$  a real quadratic extension or an imaginary $S_3$-extension, if the  generalized $abc$-conjecture holds in~$\K$, then there exist at least $c\  \log X$ prime numbers $p \leq X$ for which~$\K$ is $p$-rational, here $c$ is some nonzero constant depending on~$\K$. The real quadratic  case was recently suggested by  B\"ockle-Guiraud-Kalyanswamy-Khare.
\end{abstract}


\section*{Introduction}

Let $\K$ be a number field and let $p$ be a prime number. To simplify, we assume $p$ odd.
Denote by $\K_p$ the maximal pro-$p$-extension of $\K$ unramified outside $p$; put $G_p:=\Gal(\K_p/\K)$.

By class field theory, the pro-$p$ group $\G_p$ is finitely generated and one knows, since Shafarevich and Koch, that moreover $\G_p$ is finitely presented (meaning that $H^2(\G_p,\F_p)$ is finite).
In fact, $\G_p$ may be pro-$p$ free, for example when $\K=\Q$, or when $\K$ is an imaginary quadratic field (when $p>3$) and $p$ doesn't divide the class number of $\K$, or when $\K=\Q(\zeta_p)$ for $p$ regular primes, etc.

A number field $\K$ for which $\G_{p}$ is pro-$p$  free is called {\it $p$-rational} (\cite{Movahhedi-phd}).
Observe that~$\K$ is $p$-rational if and only if the Leopoldt conjecture holds for $\K$ at $p$ and the torsion $\T_p$ of the abelianization $\G_p^{ab}$ of $\G_p$ is trivial (see \cite{Nguyen2}, or \cite[Chapter X, \S 3]{NSW}).

\medskip

The  study of  $\T_p$ and  of the $p$-rationality started in the beginning of the 80's with Gras, Nguyen Quang Do, Movahhedi, Jaulent, and their students. Since the literature is rich: see for example \cite{Mova-Nguyen}, \cite{Movahhedi},  \cite{Gras-Jaulent},  \cite{Jaulent-Nguyen}, \cite{Movahhedi-phd}, \cite{Jaulent-Sauzet}, \cite{Sauzet}, \cite{Gras-Sram} etc. See also  \cite[Chapitre IV, \S 3 and \S 4]{Gras-livre} for a well-detailed  presentation of  $\T_p$, of the  Leopoldt conjecture and of 
 $p$-rational fields. In the spirit of our paper, let us mention here the works of
Byeon \cite {Byeon} and Assim-Bouazzaoui~\cite{Assim-Bouazzaoui} where they showed the infiniteness of $3$ and $5$-rational real quadratic fields.

Let us also precise at this level that a recent series of papers in different topics in number theory showed the interest of $p$-rational fields:  Goren \cite{Goren}, Greenberg \cite{greenberg},  B\"ockle-Guiraud-Kalyanswamy-Khare \cite{Boeckle-andall}, David-Pries \cite{David-Pries},
 Hajir-Maire \cite{Hajir-Maire}, Hajir-Maire-Ramakrishna~\cite{Hajir-Maire-Ramakrishna},  etc.
 
\medskip
 
 Assuming Leopoldt conjecture (for $\K$ at $p$), the $p$-rationality of $\K$ is therefore  equivalent to the nullity of  $\T_p$. Observe that $\T_p \simeq H^2(\G_p,\Z_p)^*$ for a cohomological point of view (see \cite{Nguyen}).
When the  $p$-Sylow of the class group of $\K$ is trivial,   the quantity $\T_p$ is isomorphic  to the torsion of the quotient of the units of the $p$-adic completions $\K_v$ of $\K$ by the closure of the global units. Moreover, if we assume that no $\K_v$ contains the $p$-roots of the unity (which is always the case when  $p>[\K:\Q]+1$),  then the triviality of $\T_p$ is equivalent to  the triviality of the {\it normalized $p$-adic  regulator} defined by Gras  \cite[Definition 5.1]{Gras-IJNT}. 
Recently,    Gras \cite{Gras-BS}, \cite{Gras-algo}, Pitoun-Varescon \cite{Pitoun-Varescon},  Barbulescu-Ray  \cite{Barbulescu-Ray} published a series of papers more concentrated on the computations  of $\T_p$, and on some heuristics.
In \cite[Conjecture 8.11]{Gras-CJM},  Gras proposed the following conjecture:

\begin{conjecture}[Gras]\label{conjecture} Let  $\K$ be a number field. Then for large $p$,  $\K$ is $p$-rational.  
\end{conjecture}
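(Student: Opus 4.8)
The final statement is a conjecture rather than a theorem, so an honest proof plan must locate precisely the point at which it becomes inaccessible unconditionally; this is also what the remainder of the paper is organised around. The plan is to fix $\K$ and, for $p$ large, to reduce $p$-rationality to a single $p$-adic condition. As recalled in the discussion above, $\K$ is $p$-rational exactly when the Leopoldt conjecture holds for $\K$ at $p$ and $\T_p=0$. First I would dispose of the class-group contribution: the class number $h_\K$ is a fixed integer, so the $p$-Sylow of $\Cl(\K)$ is trivial for every $p>h_\K$, and in particular for all large $p$. Next, since $p>[\K:\Q]+1$ forces every completion $\K_v$ above $p$ to contain no nontrivial $p$-th root of unity, the torsion $\T_p$ reduces, as recalled above, to the torsion of the quotient of the local units $\U=\prod_{v\mid p}\U_v$ by the closure $\overline{\E}$ of the global units. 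The vanishing of this torsion is then governed by Gras's normalized $p$-adic regulator: for large $p$ one has $\T_p=0$ if and only if this regulator is trivial.

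The reduction thus concentrates everything on a single arithmetic quantity. Writing $\varepsilon_1,\dots,\varepsilon_r$ for a system of fundamental units of $\K$ and $\log_p$ for the $p$-adic logarithm applied to the local embeddings $\sigma_j$, the normalized regulator is, up to the normalization absorbing the trivial ramification factors, the determinant of the matrix $\bigl(\log_p\sigma_j(\varepsilon_i)\bigr)_{i,j}$. Two things must be shown for large $p$: that this determinant is nonzero, which is exactly the Leopoldt conjecture for $\K$ at $p$, and that its $p$-adic valuation is zero, which is the extra input making $\K$ genuinely $p$-rational rather than merely Leopoldt. The strategy is to bound the $p$-adic valuation of each entry $\log_p\sigma_j(\varepsilon_i)$, and hence of the determinant, by controlling how often a fixed unit $\varepsilon_i$ can be congruent to $1$ modulo a large power of $p$ in a completion.

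The main obstacle is precisely this last control, and it is the reason the statement remains a conjecture. A positive $p$-adic valuation of an entry means $\sigma_j(\varepsilon_i)\equiv 1 \pmod{\p^k}$ for some $\p\mid p$ and some $k\geq 1$, that is, $\p^k$ divides the algebraic integer $\sigma_j(\varepsilon_i)-1$. There is no unconditional mechanism forbidding such coincidences for infinitely many $p$: the fundamental units are fixed, but as $p$ ranges over all primes one cannot rule out, by height or sieve arguments alone, that a large power of $p$ divides a norm of $\varepsilon_i-1$ type. This is exactly where the generalized $abc$-conjecture enters as the natural input: applied to the $S$-unit relations expressing $\varepsilon_i-1$, it limits the powerful part of such differences and thereby forces the relevant $p$-adic valuations to vanish. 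Establishing the valuation bound unconditionally for \emph{all} large $p$, rather than for a density of primes, is what lies out of reach; the conditional approach developed below, in the real quadratic and imaginary $S_3$ cases, is what one can actually prove, and it already yields infinitely many $p$-rational primes, consistent with the conjecture.
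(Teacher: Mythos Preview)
This statement is Gras's conjecture, and the paper does not prove it; it is quoted as an open conjecture, and the rest of the paper establishes only partial conditional results (Theorem~A and its Corollary) under the generalized $abc$-conjecture. You correctly recognise this and your reduction---discarding the class-group part for $p>h_\K$, eliminating local $p$-th roots of unity for $p>[\K:\Q]+1$, and collapsing the question to the vanishing of the normalized $p$-adic regulator---matches exactly the reductions the paper recalls in its introduction and in \S\ref{section1}.

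Two small remarks. First, your description of the obstruction is slightly off in emphasis: the relevant Wieferich-type condition is not that $\sigma_j(\varepsilon_i)\equiv 1\pmod{\p}$ for some $\p\mid p$, but rather that after raising to the exponent $\N(\p)-1$ one still has $\varepsilon_i^{\N(\p)-1}\equiv 1\pmod{\p^2}$; this is the form used in Lemma~\ref{lemma1.2} and is what the $abc$-conjecture is brought in to exclude. Second, you speak of proving the result ``for a density of primes'', but even conditionally the paper obtains only $\gg\log X$ primes $p\leq X$, which is far from a positive density; the gap between the conjecture (all large $p$) and what $abc$ yields is therefore wider than your phrasing suggests.
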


This conjecture is in the same spirit of the Wieferich prime numbers problem. Indeed, given an odd prime number $p$, to compute the $p$-valuation of  $2^{p-1}-1$ is equivalent to compute the normalized $p$-adic  regulator of the   $2$-units of $\Q$. In particular, in this case the nontriviality of the normalized $p$-adic  regulator  is equivalent for $p$ to verify  the congruence $2^{p-1} \equiv 1 (\mod \ p^2)$. 

\medskip

In \cite{Silverman} Silverman showed how the  Wieferich prime numbers are related to the $abc$-conjecture. Let us be more precise.
Given an integer $\alpha \in \Q^\times\backslash \{\pm 1\}$,   Silverman proved  that if the $abc$-conjecture holds then as $X \rightarrow \infty$
$$\#\{{\rm prime \ number} \ p, \ p\leq X, \	\alpha^{p-1} \nequiv 1 (\mod \ p^2)\} \geq c\  \log X,$$
where $c>0$ is some absolute constant. See also \cite{Graves-Murty}, and \cite{SS} for a generalization of Wieferich primes in number fields.
 
\medskip

Observe now that the generalized $abc$-conjecture has already been used in  the context of Iwasawa theory. Indeed in \cite{Ichimura} Ichimura gave a relationship between the Greenberg conjecture and the $abc$-conjecture.  A consequence of his work is that, for example,  for any real quadratic  field $\K$ if the generalized $abc$-conjecture holds in $\K$, then  the set of primes $p$ for which $\K$ is $p$-rational, is infinite. See also  \cite{Boeckle-andall}. 

\medskip

 The goal of our work is  to precise the quantity of such primes $p$, greatly inspired by the computations of Silverman.
 
 \
 
Our main result involves the isotypic subspaces $\T_p^\chi$ of $\T_p$. 
Let us observe here that
the authors studied previously in \cite{Maire-Rougnant} such cutting and the arithmetic consequences of the nullity of some $\T_p^\chi$.

\medskip

 Let $\K/\Q$ be a Galois extension of Galois group $\G$. Let us fix an odd prime number $p \nmid \# \G$. For an irreducible $\Q_p$-character $\psi$ of $\G$, let $r_\psi(E_\K)$ be the $\psi$-rank of $\Q_p \otimes E_\K$, where   $E_\K$ denotes the units of the ring of integers $\O_\K$ of $\K$.
 Let us also  cut $\T_p$ by its isotypic subspaces $\T_p^\psi$, and 
 denote  by $r_\psi(\T_p)$ the $\psi$-rank of $\T_p$.
 Observe that, assuming Leopoldt conjecture,   the number field $\K$ is $p$-rational if and only if $r_\psi(\T_p)=0$ for all irreducible $\Q_p$-characters~$\psi$. Moreover we will see that for $p\gg 0$, $r_\psi(\T_p) \leq r_\psi(E_\K)$ for all $\psi$.
 
 We will then focus on some special units $u$ of $E_\K$: we denote by $\Ss$ the set of algebraic integers $u \in \overline{\Q}$ having no conjugate on the unit circle.
 
 Here we prove:
 
\medskip
 
\begin{Theorem} \label{theo_main}
Let $\K/\Q$ be a Galois extension of Galois group $\G$ and let   $\chi$ be an irreducible $\Q$-character of $\G$ such that  the $\chi$-component of  $\Q\otimes E_\K$ contains some unit $u \in \Ss$.  If the generalized $abc$-conjecture holds for $\K$, then  as $X\rightarrow \infty$ $$\#\{ {\rm prime \ number} \ p\leq X, \ r_\psi(\T_p)< r_\psi(E_\K)  \ {\rm for \ some \ } {\rm irred. \ } \Q_p {\rm -char. \ }  \psi|\chi\} \geq  c \ \log X,$$ for some constant $c>0$ depending on $\K$.
\end{Theorem}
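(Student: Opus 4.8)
The plan is to adapt Silverman's treatment of Wieferich primes: first reduce the statement to a single Wieferich-type non-congruence for the unit $u$, and then feed the $S$-unit identity $(1-u^m)+u^m=1$ into the generalized $abc$-conjecture for $\K$. Call a prime $p$ \emph{admissible} if it is odd, unramified in $\K$, prime to $\#\G$ and to the class number of $\K$, and larger than $[\K:\Q]+1$; all but finitely many primes are admissible. For admissible $p$ put $n=[\K:\Q]$ and $M_p=p^{n!}-1$, so that $\N\q-1\mid M_p$ for every prime $\q$ of $\K$ above $p$ while $p\nmid M_p$; say $p$ satisfies $(\mathrm W_u)$ if $u^{M_p}\nequiv 1\ (\mod\ \p^2)$ for at least one prime $\p\mid p$ of $\O_\K$.

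\emph{Step 1 (arithmetic translation).} For admissible $p$ the group $\T_p$ is the torsion subgroup of $\U/\overline{E_\K}$, where $\U:=\prod_{v\mid p}U^1_v$ is free over $\Z_p$ (no $U^1_v$ contains $\mu_p$ as $p>[\K:\Q]+1$) and $\overline{E_\K}$ is the closure of the image of $E_\K$ in $\U$; the whole picture is $\G$-equivariant. Now suppose $r_\psi(\T_p)=r_\psi(E_\K)$ for every $\Q_p$-irreducible character $\psi\mid\chi$. If Leopoldt's conjecture for $\K$ at $p$ failed on some $\psi$-part, that equality would already be a strict inequality and there would be nothing to prove; so one may assume each $\overline{E_\K}^\psi$ has full rank $r_\psi(E_\K)$, and then an elementary-divisor computation (valid since $\Z_p[\G]$ is a maximal order, $p\nmid\#\G$) shows that $r_\psi(\T_p)=r_\psi(E_\K)$ forces $\overline{E_\K}^\psi\subseteq p\,\U^\psi$ (note that $p$ is a uniformizer of $\Z_p[\psi]$, unramified over $\Z_p$ since $p\nmid\#\G$). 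By equivariance the image $\bar u$ of $u$ lies in $\U^\chi$, hence in $p\,\U^\chi$, and the $p$-adic logarithm, which is an isomorphism $\U\xrightarrow{\sim}p\,(\O_\K\otimes\Z_p)$ for unramified $p$, then yields $u^{M_p}\equiv 1\ (\mod\ \p^2)$ for all $\p\mid p$, i.e.\ $p$ fails $(\mathrm W_u)$. Contrapositively, every admissible $p$ satisfying $(\mathrm W_u)$ lies in the set counted in the theorem, so it suffices to produce $\gg\log X$ admissible primes $p\le X$ satisfying $(\mathrm W_u)$.

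\emph{Step 2 ($abc$ count).} For a fixed integer $m$, apply the generalized $abc$-conjecture for $\K$ to $(1-u^m)+u^m=1$. Since $u^m$ and $-1$ are units, the radical of the triple is $\Rad_m=\prod_{\p\mid 1-u^m}\N\p$, while the projective height of the triple is $\asymp e^{mh^+}$ with $h^+:=\sum_{|\sigma u|>1}\log|\sigma u|>0$; the positivity of $h^+$ and the dominance of the growing archimedean term are exactly where $u\in\Ss$ is used (no conjugate of $u$ lies on the unit circle, and $u$ is a unit). Thus $\Rad_m\ge c_\varepsilon\,e^{mh^+/(1+\varepsilon)}$ for every $\varepsilon>0$. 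Separating the $\p\mid 1-u^m$ with $v_\p(1-u^m)=1$ from the rest, and using $\sum_\p v_\p(1-u^m)\log\N\p=\log|\N_{\K/\Q}(1-u^m)|\le mh^+ + O(1)$, one gets, on fixing $\varepsilon<\tfrac12$, that $\sum_{\p\mid 1-u^m,\ v_\p(1-u^m)=1}\log\N\p\ge\tfrac{h^+}{3}\,m$ for $m$ large. A direct valuation computation --- using $p\nmid M_p$ and $p\nmid\N\p-1$, hence $p\nmid m_\p$ where $m_\p:=\mathrm{ord}_{\kappa_\p}(u)$ --- shows that such a $\p$ satisfies $v_\p(1-u^{m_\p})=1$, so $u^{M_p}\nequiv 1\ (\mod\ \p^2)$ and the rational prime below $\p$ satisfies $(\mathrm W_u)$. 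Finally let $m$ range over $[m_0,c_2\log X]$ with $c_2$ chosen $<1/h^+$, so that every $\p$ that occurs has $\N\p\le|\N_{\K/\Q}(1-u^m)|\le X$. Summing the last bound over these $m$ gives a total $\gg(\log X)^2$; on the other hand a fixed $\p$ occurs only for $m$ divisible by $m_\p$ and contributes at most $\log\N\p\cdot c_2\log X/m_\p\le(h^++O(1))\,c_2\log X$, since $\log\N\p\le m_\p h^+ + O(1)$. Hence $\gg\log X$ primes $\p$ appear, and as at most $[\K:\Q]$ of them lie over a common rational prime, at least $c\log X$ primes $p\le X$ satisfy $(\mathrm W_u)$; after discarding the finitely many non-admissible ones, Step 1 finishes the argument.

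I expect the main obstacle to be Step 2: setting up the $abc$-input so that the resulting primes are genuinely distinct and $\le X$, keeping all the constants uniform, and above all the archimedean estimate for the height of the triple, which is precisely where $u\in\Ss$ (the analogue of Silverman's condition $\alpha\neq\pm 1$) is indispensable. Step 1, by contrast, is essentially formal once the isotypic description of $\T_p$ via local and global units is in hand.
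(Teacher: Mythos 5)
Your proposal is correct, and it has the same two-part architecture as the paper (an algebraic reduction to a Wieferich-type non-congruence for $u$, then an $abc$ count), but the counting half goes by a genuinely different route. Your Step 1 is essentially the paper's Lemma \ref{lemma1.2} together with the discussion of \S\ref{section213}: the only differences are cosmetic (you raise $u$ to $M_p=p^{n!}-1$ instead of $\N(\p_0)-1$, and you spell out the elementary-divisor argument showing that maximal $\psi$-rank for every $\psi\mid\chi$ forces the closure of $\iota(\E_\K)^\chi$ into the $p$-th powers of $\U_p$, hence the congruence mod $\p^2$); like the paper you dispose of a possible Leopoldt failure by noting $r_\psi(\T_p)\le r_\psi(\iota(\E_\K))$. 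In Step 2 the paper follows Silverman literally: Lemma \ref{lemma_5} gives $|\N(\Phi_n(u^k))|\ge e^{cn}$ for $n$ with $\varphi(n)\ge n/2$ (this is where $u\in\Ss$ is genuinely used, to keep the factors $|\sigma(u^k)-\zeta^j|$ away from $0$ without transcendence estimates), $abc$ is applied to $u^n=(u^n-1)+1$, and from the squarefree part of $\Phi_n(u)$ one extracts a prime $\p_n\nmid n$ modulo which $u$ has order exactly $n$, so the $\p_n$ are automatically distinct and $\N(\p_n)\le\gamma^n$ gives the count (Proposition \ref{prop_key}). You instead apply $abc$ directly to $1=u^m+(1-u^m)$ and run a double count over $m\le c_2\log X$: the per-$m$ lower bound on the part of $\log\Rad(1-u^m)$ supported on primes with $v_\p=1$ sums to $\gg(\log X)^2$, while the divisibility $m_\p\mid m$ and $\log\N\p\le m_\p h^++O(1)$ cap each fixed prime's total contribution at $O(\log X)$, so $\gg\log X$ distinct primes occur. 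This is closer in spirit to the non-Wieferich counts of \cite{Graves-Murty} and \cite{SS} than to \cite{Silverman}; what it buys is that you never need cyclotomic values, the restriction $\varphi(n)\ge n/2$, or the auxiliary power $u^k$, and in fact you only use $h^+>0$, i.e.\ that $u$ is not a root of unity, which is weaker than $u\in\Ss$ (your closing comment overstates where $\Ss$ enters); what the paper's route buys is primes modulo which $u$ has prescribed order, making distinctness transparent. Two harmless slips: with merely $\varepsilon<\tfrac12$ the coefficient $\frac{1-\varepsilon}{2(1+\varepsilon)}$ can fall below $\tfrac13$, so take $\varepsilon\le\tfrac15$ or claim only $\tfrac{h^+}{6}m$; and $v_\p(1-u^{m_\p})=1$ already follows from $v_\p(1-u^m)=1$ alone --- the hypothesis $p\nmid M_p$ is what you need for the passage to the exponent $M_p$, not for that step. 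Neither affects the argument.
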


\medskip (Of course, in Theorem \ref{theo_main} one considers only prime numbers $p \nmid \#\G$.) 
As consequence we obtain the following result (the real quadratic case was suggested in \cite{Boeckle-andall}):

\medskip

\begin{Corollary} \label{theo_real}
Let $\K/\Q$ be a real quadratic field or an imaginary $S_3$-extension. If the generalized $abc$-conjecture holds for $\K$,  then as $X \rightarrow \infty$
$$\#\{ {\rm prime \ number} \ p \leq X, \ \K \ {\rm is } \ p{\rm -rational}\} \geq  c \ \log X,$$ for some constant $c>0$ depending on $\K$.
\end{Corollary}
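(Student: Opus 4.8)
The plan is to deduce the Corollary from Theorem~\ref{theo_main} by exhibiting, in each of the two cases, an irreducible $\Q$-character $\chi$ of $\G=\Gal(\K/\Q)$ whose $\chi$-component of $\Q\otimes E_\K$ contains a unit lying in $\Ss$, and then checking that ``$r_\psi(\T_p)<r_\psi(E_\K)$ for some $\psi\mid\chi$'' forces $p$-rationality in these specific situations. First I would treat the real quadratic case: here $\G=\Z/2\Z$, the nontrivial $\Q$-character $\chi$ is the sign character, it is already $\Q_p$-irreducible (so $\psi=\chi$), and $r_\chi(E_\K)=1$ by Dirichlet (the full unit rank is carried by the nontrivial isotypic piece since the trivial part comes from $E_\Q=\{\pm1\}$). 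A fundamental unit $\varepsilon$ of $\K$ is a real quadratic irrationality with $|\varepsilon|\neq 1$ and conjugate $\pm\varepsilon^{-1}$, neither of which sits on the unit circle, so $\varepsilon\in\Ss$; thus Theorem~\ref{theo_main} applies. For such $p\nmid\#\G=2$ and $p$ large, the inequality $r_\chi(\T_p)<r_\chi(E_\K)=1$ means $r_\chi(\T_p)=0$; combined with $r_{\mathbf 1}(\T_p)=0$ (the trivial component corresponds to $\Q$, which is $p$-rational) and the already-quoted fact $r_\psi(\T_p)\le r_\psi(E_\K)$ for $p\gg0$, we get $\T_p=0$, hence (Leopoldt being known for abelian fields) $\K$ is $p$-rational. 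So the counting estimate transfers verbatim.

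Next I would handle the imaginary $S_3$-extension $\K/\Q$, so $\G=S_3$. Its irreducible $\Q$-characters are the trivial one $\mathbf 1$, the sign character $\epsilon$ (degree $1$), and the standard $2$-dimensional character $\rho$ (which is $\Q$-irreducible and, for $p\nmid 6$, also $\Q_p$-irreducible). Since $\K$ is imaginary of degree $6$, the unit rank of $\K$ is $r_1+r_2-1=0+3-1=2$; decomposing $\Q\otimes E_\K$ into isotypic components and using that complex conjugation is a central-free element one checks $r_{\mathbf 1}=0$, $r_\epsilon=0$ (the quadratic resolvent subfield $\Q(\sqrt d)$ is imaginary, so it has no units of infinite order), and hence $r_\rho(E_\K)=2$, realized with multiplicity one. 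So I take $\chi=\rho$. To invoke Theorem~\ref{theo_main} I must produce a unit $u$ in the $\rho$-component with $u\in\Ss$: the cubic subfield $F$ of $\K$ (a non-Galois cubic, which is \emph{not} totally real since $\K$ is imaginary, hence $F$ has signature $(1,1)$) has a fundamental unit $\eta$ of infinite order; being a unit of a cubic field of signature $(1,1)$ none of its three archimedean absolute values equals $1$ (the two complex ones are equal and their common square times the real one is $1$, none can be $1$ unless all are, contradicting infinite order), so $\eta\in\Ss$, and $\eta$ (having trivial trace to $\Q$ up to torsion, and generating a rank-one subspace inside the $\rho$-isotypic part after averaging over the stabilizer) lies in the $\rho$-component. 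Then Theorem~\ref{theo_main} gives $\gg\log X$ primes $p\le X$ with $r_\psi(\T_p)<r_\psi(E_\K)$ for some $\psi\mid\rho$, i.e.\ (since $\rho$ is $\Q_p$-irreducible) $r_\rho(\T_p)<2$.

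The remaining point, and the one requiring a little care, is to upgrade ``$r_\rho(\T_p)<2$'' to ``$\T_p=0$'', i.e.\ full $p$-rationality. The subtlety is that a priori $r_\rho(\T_p)$ could drop from $2$ to $1$ rather than to $0$, and one must rule that out. I expect to argue via the reflection/transfer structure of $\T_p$ under $\G$: the $\rho$-isotypic piece of $\T_p$, via restriction to the index-$2$ cyclic subgroup or via the cubic subfield $F$, is governed by $\T_p(F)$, and for $p$ large the relevant module is generated over $\Z_p[\G]$ by a single element coming from the global unit $\eta$, so its $\rho$-rank is either $0$ or $2$ — it cannot be $1$. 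Equivalently: the normalized $p$-adic regulator attached to $\eta$ is an element of a free rank-one $\Z_p$-module on which $\G$ acts through $\rho$, so its vanishing is an all-or-nothing phenomenon on the $\rho$-component. Hence $r_\rho(\T_p)<2\Rightarrow r_\rho(\T_p)=0$, and together with $r_{\mathbf 1}(\T_p)=r_\epsilon(\T_p)=0$ for $p\gg0$ (these components being controlled by $\Q$ and by the imaginary quadratic resolvent, both $p$-rational for large $p$) we conclude $\T_p=0$. Since Leopoldt's conjecture holds for $\K$ (it is a solvable, indeed metabelian, extension of $\Q$, so Brumer's theorem applies), $\T_p=0$ means $\K$ is $p$-rational, and the count $\#\{p\le X:\K\text{ is }p\text{-rational}\}\ge c\log X$ follows. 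The main obstacle is precisely this last rigidity argument showing the $\rho$-rank of $\T_p$ cannot take the intermediate value $1$; everything else is a direct unwinding of Theorem~\ref{theo_main} plus standard unit-rank and Leopoldt input.
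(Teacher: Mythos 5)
Your overall route is the same as the paper's: pick the nontrivial character in the real quadratic case and the degree-$2$ character $\rho$ in the $S_3$ case, observe that all of $\Q\otimes E_\K$ sits in that isotypic component and contains a unit of $\Ss$, kill the remaining components of $\T_p$ via $r_\psi(\T_p)\leq r_\psi(E_\K)$, and invoke Leopoldt. The real quadratic half is correct as written. The problem is in the $S_3$ half, at exactly the step you single out as the main obstacle. With the paper's normalization of the $\psi$-rank (\S\ref{section213}, division by $\deg\psi$) one has $r_\psi(E_\K)=s_\chi t_\chi=1$, not $2$: the unit rank $2$ of an imaginary $S_3$-field is one copy of the two-dimensional $\rho$, since $\Q\otimes E_\K\simeq\Ind_{D_\infty}^{\G}\1\ominus\1\simeq\rho$. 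Theorem \ref{theo_main} then gives $r_\psi(\T_p)<1$, hence $\T_p^\psi=0$, and there is no intermediate value to exclude. If instead you insist on counting plain $\F_p$-dimensions, the value $1$ is ruled out by a simple representation-theoretic fact: for $p>3$ the algebra $\F_p[S_3]$ is semisimple and $\rho$ stays irreducible mod $p$, so the $\rho$-isotypic part of $\T_p/\T_p^p$ is a direct sum of two-dimensional simple modules and has even dimension. Your proposed ``rigidity'' argument does not establish this: the statement that the relevant object lies in ``a free rank-one $\Z_p$-module on which $\G$ acts through $\rho$'' is contradictory ($\rho$ has degree $2$ and cannot act on a rank-one module), and the claim that $\T_p^\rho$ is generated over $\Z_p[\G]$ by a single element coming from $\eta$ is neither justified nor needed. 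As written that step is a gap, though an easily repaired one --- and one that disappears entirely once the paper's definition of $r_\psi$ is used, which is how the paper's own proof proceeds ($r_\psi(E_\K)=1$, $\T_p=\T_p^\psi$, done).

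A second, smaller inaccuracy: Leopoldt for the imaginary $S_3$-field does not follow from the field being ``solvable, indeed metabelian'' --- Brumer's theorem covers abelian extensions of $\Q$ and of imaginary quadratic fields, not general metabelian extensions of $\Q$. The correct reason, and one of the two reasons the Corollary requires $\K$ imaginary (the other being that a totally real $S_3$-field has $r_\psi(E_\K)=2$, so Theorem \ref{theo_main} would no longer force $\T_p^\psi=0$), is that $\K$ is cyclic cubic, hence abelian, over its imaginary quadratic subfield.
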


\begin{Remark}
 It is well known that Leopoldt conjecture holds in the situations of Corollary, but we don't assume Leopoldt conjecture in Theorem \ref{theo_main}.
\end{Remark}

Let us add one additionnal remark about the units in  $\Ss$.

\begin{Remark} \label{rema_Pisot} 
 The  following observations will be useful for us: 
\begin{itemize}
\item[$-$] an unit $u \neq \pm 1$ for which all the conjugates are  real is in $\Ss$;
\item[$-$]  every cubic field contains some unit $u \in \Ss$;
\item[$-$] Pisot numbers are in $\Ss$.
\end{itemize}
See also \cite{Bertin} on the abundance of Pisot units.
\end{Remark}

Our work contains two  sections. In the first one, we introduce the objects we need. In the second section, we give the proofs of our results.


\section{The objects} \label{section1}
We start with a Galois extension $\K/\Q$ of degree $m$ and Galois group $\G$. We denote by~$\N$ the norm in $\K/\Q$.

Let $\O_\K$ be the ring of integers of $\K$, $E_\K$ be the units of $\O_\K$, and $\mu_\K$ be the group of the roots of the unity of $\K$.

 \medskip 
 
 Let $p$ be an {\it odd prime} number. In all that will follow, we suppose    that:
\begin{itemize} 
\item[$(i)$] $p\nmid \#\G$,
\item[$(ii)$] $p$ is unramified in $\K/\Q$,
\item[$(iii)$] $p$ does not divide the class number $h_\K$ of $\K$.
\end{itemize}

One excludes this way only a {\it finite set} of prime numbers $p$. In particular, there exists an explicit prime number $p_0$ such that  every  $p>p_0$ satisfies $(i), (ii)$ and $(iii)$.

\subsection{$p$-rational fields and isotypic components}

\subsubsection{}
Let $S_p$ be the set of places of $\K$ above $p$. For $v \in S_p$, denote by $\K_v$ the completion of $\K$ at $v$, by $\O_v$ the   ring of integers of $\K_v$, and by $\pi_v$ an uniformizer of~$\K_v$. 
Then the $p$-completion $\E_\K:=\Z_p\otimes E_K$ of $E_\K$ embeds diagonally, via $\iota$,  in $\U_p:=\prod_{v\in S_p} \U_v^1$, where $\U_v^1:=1+\pi_v \O_v$ is the group of principal units of $\K_v$. Observe that here $\U_p \simeq \Z_p^m$.
By $p$-adic class field theory (and due to the fact that $p\nmid h_\K$), the group $\G_p^{ab}$ is isomorphic to $\U_p/\iota(\E_\K)$. Then, assuming  Leopoldt conjecture for $\K$ at~$p$ (meaning here that $\iota$ is injective), the number field~$\K$ is~$p$-rational if and only if $\U_p/{\iota(\E_\K)}$ is without torsion.

\subsubsection{} \label{section212}
 
Observe that as $p$ is unramified in $\K/\Q$, we also get that  $p\nmid |\mu_\K|$, and as $p\nmid \#\G$, the character (as $\G$-module) of $\E_\K$ is equal to the character of $\Q_p\otimes(\Q\otimes E_\K) \simeq \Ind_{D_\infty}^\G \1$, 
where $D_\infty$ is the decomposition group  of an archimedean place in $\K/\Q$ and where $\1$ is the trivial character. In particular, $\E_\K$ is a submodule of the regular representation.

To be complete,   $\U_p$ is isomorphic to the regular representation (here $\U_v^1$ has no nontrivial root of unity).

\subsubsection{} \label{section213}

Let us fix an irreducible $\Q$-character $\chi$ of $\G$. 
Let   $\Q[\G]e_\chi \simeq {\rm M}_{n_\chi}(D)$ be the simple algebra of $\Q[\G]$ associated to $\chi$, where $D$ is a skew field of degree $s_\chi^2$ over its center (the integer $s_\chi$ is the Schur index of $\chi$).
Then $\chi=s_\chi \sum_{\psi|\chi}  \psi$, where the sum is taken over irreducible $\Q_p$-characters $\psi$ dividing~$\chi$ (here $p\nmid \#\G$). 

Let $E_\K^\chi$ be the $\chi$-component of the $\Q[\G]$-module $\Q\otimes E_\K$, then the character of  $E_\K^\chi$  is written as $t_\chi \chi$ for some $t_\chi \in \{0, \cdots, n_\chi\}$.
Given an irreducible $\Q_p$-character $\psi|\chi$, the integer $s_\chi t_\chi$ is then the $\psi$-rank $r_\psi(E_\K)$  of $\Q_p\otimes E_\K$.

\medskip

If $M$ is a $\Z_p[\G]$-module of finite type,  the $\psi$-rank $r_\psi(M)$ of $M$ is defined as  $\displaystyle{r_\psi(M):=\frac{1}{\deg(\psi)}\dim_{\fq_p}( M^\psi/(M^\psi)^ p)}$.

As seen before $r_\psi(E_\K)=r_\psi(\E_\K)$, obviously $r_\psi(\E_\K) \geq r_\psi(\iota(\E_\K))$, and Leopoldt conjecture is equivalent to the equality $r_\psi(\E_\K)=r_\psi(\iota(\E_\K))$ for every  $\chi$ and~$\psi$. 
Observe that one knows that $r_\psi(\iota(\E_\K))\geq 1$ when $r_\psi(\E_\K) \neq 0$ (see \cite{Jaulent}).
\begin{rema} When $\G$ is abelian, one has $r_\psi(\E_\K) \leq  1$.
 \end{rema}

As seen before, with all the assumptions,  the torsion of  $\U_p/\iota(\E_\K)$ is isomorphic to $\T_p$. Thus, $r_\psi(\T_p) \leq r_\psi(\E_\K)$. If for every $\psi|\chi$ the $\psi$-rank of $\U_p/\iota(\E_\K)$ is maximal, meaning $r_\psi(\T_p)=r_\psi(\E_\K)$, then necessarily, for every unit $x \in E_\K^\chi$ such that $x\equiv 1 (\mod  \ \p)$  for all $\p|p$, one must have $x\equiv 1 (\mod  \ \p^2)$ for all $\p|p$.

\begin{lemm} \label{lemma1.2}

If there exists an unit $u \in E_\K ^\chi$ such that $u\equiv 1 (\mod \ \p_0)$ but $u\nequiv 1 (\mod \ \p_0^2)$ for some $\p_0|p$, then $r_\psi(\T_p) <r_\psi(\E_\K)$ for some $\psi |\chi$.
\end{lemm}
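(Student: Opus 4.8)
The plan is to show the contrapositive: if $r_\psi(\T_p) = r_\psi(\E_\K)$ for every $\psi \mid \chi$, then no unit $u \in E_\K^\chi$ can satisfy $u \equiv 1 \pmod{\p_0}$ but $u \not\equiv 1 \pmod{\p_0^2}$. In fact the paper has already done most of the work in the paragraph immediately preceding the lemma: it records that, under all the standing assumptions $(i)$–$(iii)$, the torsion $\T_p$ of $\U_p/\iota(\E_\K)$ is isomorphic to $\T_p$, and that if $r_\psi(\T_p) = r_\psi(\E_\K)$ for every $\psi \mid \chi$ — i.e. the $\psi$-rank of $\U_p/\iota(\E_\K)$ is maximal for all such $\psi$ — then every $x \in E_\K^\chi$ with $x \equiv 1 \pmod{\p}$ for all $\p \mid p$ already satisfies $x \equiv 1 \pmod{\p^2}$ for all $\p \mid p$. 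So the substantive point is to upgrade the hypothesis of the lemma, which only asks for congruence at a \emph{single} prime $\p_0 \mid p$, to the statement at \emph{all} primes above $p$, by exploiting the Galois action and the fact that $u$ lies in the $\chi$-component.

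The key step is therefore the following. Suppose $u \in E_\K^\chi$ with $u \equiv 1 \pmod{\p_0}$. First I would replace $u$ by a suitable power: since $p \nmid |\mu_\K|$ and $\O_\K/\p_0$ is a finite field, some power $u^a$ with $p \nmid a$ lies in $1 + \p_0$, and raising to a power prime to $p$ preserves membership in the $\chi$-component (the projector $e_\chi$ acts $\Q_p$-linearly, and on the pro-$p$ group $\U_p$ the map $x \mapsto x^a$ is a $\Z_p[\G]$-automorphism for $p \nmid a$); it also preserves the property $u \not\equiv 1 \pmod{\p_0^2}$, because $1 + \p_0^2$ is a subgroup of $1 + \p_0$ of index a power of $p$ and $(u^a - 1) \equiv a(u-1) \pmod{\p_0^2}$ with $p \nmid a$. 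Now I want to produce from $u$ a unit $x \in E_\K^\chi$ congruent to $1$ modulo $\p$ for \emph{every} $\p \mid p$. The Galois group $\G$ acts transitively on $S_p$; for $\sigma \in \G$ the conjugate $\sigma u$ is $\equiv 1 \pmod{\sigma \p_0}$. Consider $x := \prod_{\sigma \in \G} \sigma(u)^{c_\sigma}$ for appropriate exponents, or more cleanly apply a $\Z_p[\G]$-element that collapses onto the $\chi$-isotypic part: the point is that the whole Galois orbit of $u$ lies in $E_\K^\chi$ (as $E_\K^\chi$ is a $\G$-submodule), and on the finitely generated $\Z_p$-module $\E_\K$, with $p \nmid \#\G$, one has a clean decomposition into isotypic pieces on which $\Z_p[\G]$ acts through matrix algebras over unramified extensions of $\Z_p$.

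Concretely, here is the cleanest route: consider the $\p_0$-adic logarithm $\log_{\p_0}$ restricted to $1 + \p_0 \subset \K_{\p_0}^\times$, which gives a $\Z_p$-module isomorphism $1 + \p_0 \xrightarrow{\sim} \p_0 \subset \O_{v_0}$. Working inside $\U_p = \prod_{v \mid p} \U_v^1 \cong \Z_p^m$ (the regular representation of $\G$ over $\Z_p$, as recalled in §\ref{section212}), the image $\iota(u)$ of our unit lies in the $\chi$-component of $\U_p$, its $v_0$-coordinate lies in $\p_0 \O_{v_0}$ (since $u \equiv 1 \pmod{\p_0}$) but not in $\p_0^2 \O_{v_0}$. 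Because $\iota(u)$ is $\chi$-isotypic and $\G$ permutes the coordinates transitively, the condition "$v_0$-coordinate in $\p_0$ but not $\p_0^2$" propagates: for each $v$ there is $\sigma$ with $\sigma \p_0 = v$, and since $\iota(\sigma u) = \sigma \iota(u)$ has $v$-coordinate equal to $\sigma$ applied to the $v_0$-coordinate of $\iota(u)$, we get $\sigma u \equiv 1 \pmod{v}$, $\sigma u \not\equiv 1 \pmod{v^2}$. Now take $x := \prod_{\sigma} \sigma u$ — no, better: since we only need \emph{one} unit of $E_\K^\chi$ violating the conclusion at a single prime, and the preceding paragraph's contrapositive gives "$x\equiv 1\ (\p)$ all $\p$ $\Rightarrow$ $x\equiv 1\ (\p^2)$ all $\p$", the honest content is: from $u \equiv 1\ (\p_0)$, $u \not\equiv 1\ (\p_0^2)$ alone we must still land in this contrapositive's hypothesis at all primes. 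This is achieved by taking the Galois-orbit product $x = \prod_{\sigma \in \G/D_{v_0}} \sigma u$ after first arranging $u \equiv 1$ at all primes: replace $u$ by $u_0 := \prod_{\tau \in D_{v_0}} \tau u$ if needed and then...

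I will streamline: set $v_1,\dots,v_g$ the primes above $p$ and pick $\sigma_i \in \G$ with $\sigma_i \p_0 = v_i$, $\sigma_1 = 1$. Let $x := \prod_{i=1}^g \sigma_i(u) \in E_\K^\chi$ (product stays in the $\G$-submodule $E_\K^\chi$). For each $j$, the $v_j$-adic valuation of $x - 1$: write $\sigma_i(u) = 1 + a_i$ with $v_{v_j}(a_i) \geq 1$ when $\sigma_i\p_0 = v_j$, i.e. $i=j$, and $v_{v_j}(a_i)\ge 0$ otherwise — this doesn't immediately force the other factors to be $\equiv 1$. The robust fix, which I'll adopt, is to first replace $u$ by a $p$-power-prime power so that $u \equiv 1$ modulo \emph{every} $\p \mid p$ simultaneously (possible since $(\O_\K/p)^\times$ is finite of order prime to $p$... no, order need not be prime to $p$). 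Thus the genuinely robust statement uses the $\chi$-isotypy crucially through $p\nmid\#\G$: on the $\chi$-component, $\iota(\E_\K^\chi) \otimes \Q_p$ is $\Ind_{D_\infty}^\G\1$-isotypic and the sublattice where all $v$-coordinates lie in $\p$ is a $\Z_p[\G]e_\chi$-submodule; if $u$ has $v_0$-coordinate of valuation exactly $1$, then applying $e_\chi$-equivariance, the element $e_\chi \cdot (\text{something})$ builds the desired $x$. Given the paper's terse style, I expect the intended proof simply invokes the preceding paragraph together with the remark that the Galois orbit of such a $u$ generates, inside $E_\K^\chi$, a unit with the required congruence at every $\p\mid p$, so that the maximality of all $r_\psi(\T_p)$ is contradicted.

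\emph{Main obstacle.} The only delicate point is the transition from one prime $\p_0$ to all primes $\p \mid p$ while staying inside the $\chi$-component; once that is in hand, the result is just the contrapositive of the displayed implication preceding the lemma. The transitivity of $\G$ on $S_p$ together with $p \nmid \#\G$ (so that isotypic projectors are integral on the relevant $p$-adic lattices) is what makes this work, and I expect the authors to use exactly this.
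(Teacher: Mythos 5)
There is a genuine gap: you correctly reduce the lemma to the displayed implication preceding it (maximality of all $r_\psi(\T_p)$, $\psi\mid\chi$, forces every $x\in E_\K^\chi$ with $x\equiv 1 \ (\mod\ \p)$ for all $\p\mid p$ to satisfy $x\equiv 1\ (\mod\ \p^2)$ for all $\p\mid p$), but you never actually produce from $u$ a unit of $E_\K^\chi$ that is congruent to $1$ at \emph{all} primes above $p$ while staying $\nequiv 1\ (\mod\ \p_0^2)$. Your concrete attempt (the Galois-orbit product $\prod_i\sigma_i(u)$) fails for the reason you yourself notice --- at the prime $v_j$ the factors $\sigma_i(u)$, $i\neq j$, are arbitrary units mod $v_j$ --- and the fallback via isotypic projectors ($e_\chi\cdot(\text{something})$) is left as an unproved expectation, so the proof is not complete. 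Worse, you brush past the correct device with a false objection: since $p$ is unramified (standing hypothesis $(ii)$), $(\O_\K/p\O_\K)^\times\simeq\prod_{\p\mid p}(\O_\K/\p)^\times$ has order $(\N(\p_0)-1)^g$, which \emph{is} prime to $p$, contrary to your parenthetical doubt.

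The paper's proof is exactly this power trick and needs no projectors or orbit products: set $x=u^{\N(\p_0)-1}$. Because $\K/\Q$ is Galois, all primes $\p\mid p$ have the same residue field cardinality $\N(\p)=\N(\p_0)$, so Fermat's little theorem in each residue field gives $x\equiv 1\ (\mod\ \p)$ for every $\p\mid p$; writing $u=1+a$ with $a\in\p_0\setminus\p_0^2$, one has $x\equiv 1+(\N(\p_0)-1)a\ (\mod\ \p_0^2)$, and since $\N(\p_0)-1$ is prime to $p$ this is $\nequiv 1\ (\mod\ \p_0^2)$; finally $x$ stays in $E_\K^\chi$ because taking the power $\N(\p_0)-1$ is scalar multiplication on the $\chi$-component of $\Q\otimes E_\K$. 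One then concludes by the contrapositive of the preceding discussion, as you intended. So your overall architecture matches the paper, but the key construction --- the only nontrivial step --- is missing from your write-up.
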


\begin{proof}
Put  $x=u^{\N(\p_0)-1} \in E_\K^\chi$, where $\N(\p)=\# \O_\K/\p$. Observe that $x\equiv 1 (\mod \ \p)$ for every $\p|p$ (the extension $\K/\Q$ is Galois) but, easily, one also has $x\nequiv 1 (\mod \ \p_0^2)$. We conclude with the small discussion above.
\end{proof}

\subsection{The generalized $abc$-conjecture}

See  \cite{Vojta}.
If $I \subset \O_\K$ is an integral ideal, let us  denote by $\Rad(I)$ the following ideal:
$$\Rad(I)=\prod_{\p | I} \N(\p),$$
where the product is taken over prime ideal $\p$ dividing $I$ and where as usual $\N(\p)=\#\O_K/\p$ is the absolute norm of $\p$. 

The generalized $abc$-conjecture for  $\K$ states that for any $\varepsilon>0$, there exists a constant $C_{K,\varepsilon}>0$ such that the inequality :
$$\prod_{v}\max\{|a|_v,|b|_v,|c|_v\}\leqslant C_{K,\varepsilon}\left(\Rad(abc)\right)^{1+\varepsilon}$$
holds for all nonzero $a,b,c \in \O_K$ verifying $a+b=c$, $(a,b)=1$, where the product is taken over all absolute values of $\K$ and where $|\cdot|_v$ denotes the normalized norm of $\K_v$ (such that $\prod_v|x|_v=1$ for all $x\in \K^\times$).

Here we use it in the case where $b=u_2$ and $c=u_1$ are two distinct units of $\K$ and $a=u_1-u_2$ : for every $\varepsilon >0$, there exists a constant $C_{\K,\varepsilon}$ such that for all  $u_1\neq u_2 \in E_\K$, one has 
$$|\N(u_1-u_2)| \leq C_{\K,\varepsilon} \Rad((u_1-u_2))^{1+\varepsilon}.$$


\section{Proofs}

\subsection{} As explained in Introduction, some part of the proof is greatly inspired by \cite{Silverman}.

Let $\K/\Q$ be a Galois extension of degree $m$.
Consider the number field $\L:=\K(\zeta)$  where~$\zeta$ is a primitive $n$th-root of $1$. The extension $\L/\Q$ is Galois of degree $O(\varphi(n))$.  

\medskip

Let $T_n$ be the set of integers $j\in \{1,\cdots, n-1\}$ coprime to $n$. We denote by $\Phi_n$ the $n$th cyclotomic polynomial: $\displaystyle{\Phi_n(u)=\prod_{j\in T_n}(u-\zeta^j)}$.
The polynomial $\Phi_n$ is of degree $\varphi(n)$.
Thereafter, we will focus on integer $n$ such that $\varphi(n)\geq \frac{1}{2}n$.
 Recall Lemma 6 of~\cite{Silverman}: 
 $$\#\{n\leqslant X , \varphi(n) \geq \frac{1}{2} n\}\geq (\frac{6}{\pi^2}-\frac{1}{2})\ X+O(\log\ X).$$

\medskip

We start with the key lemma extending Lemma 5 of \cite{Silverman}.
 
\begin{lemm} \label{lemma_5} Let $u \in E_\K \cap \Ss$. Then there exists some $k \in \Z_{>0}$ such that
$$|\N(\Phi_n(u^k))|\geq \exp(c n),$$
for $n$ such that $\varphi(n)\geq \frac{1}{2} n$, where $c>0$ is a constant depending on $u$ and $k$.
\end{lemm}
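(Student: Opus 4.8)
Write $\sigma_1,\dots,\sigma_m$ for the embeddings $\K\hookrightarrow\C$ and set $\alpha_i:=\sigma_i(u)$. Since $\Phi_n$ has integer coefficients and $u^k\in\O_\K$, the element $\Phi_n(u^k)$ lies in $\O_\K$, so its norm is an integer and
$$|\N(\Phi_n(u^k))|=\prod_{i=1}^m|\Phi_n(\alpha_i^k)|=\prod_{i=1}^m\prod_{j\in T_n}|\alpha_i^k-\zeta^j|.$$
Because $u\in\Ss$, no conjugate of $u$ lies on the unit circle, i.e. $|\alpha_i|\neq 1$ for every $i$; in particular $\Phi_n(u^k)\neq 0$, so the left-hand side is a nonzero rational integer. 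I split the indices into $A=\{i:|\alpha_i|>1\}$ and $B=\{i:|\alpha_i|<1\}$. Since $u$ is a unit, $\prod_{i}|\alpha_i|=1$, which forces $A\neq\emptyset$ (and $B\neq\emptyset$, though only $A\neq\emptyset$ is needed).

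\textbf{Bounding the local factors.} The reverse triangle inequality gives $|\alpha_i^k-\zeta^j|\geq|\alpha_i|^k-1$ for $i\in A$, and $|\alpha_i^k-\zeta^j|\geq 1-|\alpha_i|^k$ for $i\in B$. Put $\rho:=\min_{i\in A}|\alpha_i|>1$ and $\tau:=\max_{i\in B}|\alpha_i|<1$. Using $\#T_n=\varphi(n)$, $|A|\geq 1$ and $|B|\leq m$, one gets for every $k$ with $\rho^k\geq 2$
$$|\N(\Phi_n(u^k))|\ \geq\ (\rho^k-1)^{\varphi(n)}\,(1-\tau^k)^{m\varphi(n)},$$
hence, taking logarithms and using the hypothesis $\varphi(n)\geq\tfrac12 n$ together with $\log(\rho^k-1)\geq 0$ and $\log(1-\tau^k)\leq 0$,
$$\log|\N(\Phi_n(u^k))|\ \geq\ n\Big(\tfrac12\log(\rho^k-1)+m\log(1-\tau^k)\Big).$$
As $k\to\infty$ the first term in the bracket tends to $+\infty$ while the second tends to $0$, so I fix once and for all an integer $k=k(u)$ for which $c:=\tfrac12\log(\rho^k-1)+m\log(1-\tau^k)>0$. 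Then $|\N(\Phi_n(u^k))|\geq\exp(cn)$ for all $n$ with $\varphi(n)\geq\tfrac12 n$, where $c>0$ depends on $u$ and $k$.

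\textbf{Main obstacle.} The only delicate point is the contribution of the conjugates with $|\alpha_i|<1$: they produce the \emph{negative} term $m\varphi(n)\log(1-\tau^k)$, which a priori is of the same order $n$ as the main term $\tfrac12\varphi(n)\log(\rho^k-1)$. What saves the estimate is precisely that $u\in\Ss$ forces $\tau<1$ \emph{strictly} (and $\rho>1$ strictly), so this loss decays geometrically in $k$ and is overwhelmed by the main term once $k$ is large enough — this is exactly where membership in $\Ss$, rather than merely $u\neq\pm1$, is used. Everything else reduces to the reverse triangle inequality, the multiplicativity of the norm over the complex embeddings, and the given lower bound $\varphi(n)\geq\tfrac12 n$.
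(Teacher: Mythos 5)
Your proof is correct and follows essentially the same route as the paper: isolate an embedding of modulus $>1$ (guaranteed because $u$ is a unit in $\Ss$), lower-bound each factor $|\sigma_i(u^k)-\zeta^j|$ via the reverse triangle inequality, and take $k$ large so that the geometric decay of the loss from the conjugates inside the unit circle is beaten by the growing main term. The only cosmetic difference is that the paper fixes $k$ so that each small-modulus factor is at least $\tfrac12$ (yielding the bound $\left((a^k-1)2^{-m+1}\right)^{\varphi(n)}$), whereas you keep the explicit quantities $\rho^k-1$ and $1-\tau^k$ and conclude by a logarithmic estimate.
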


\begin{proof}
As $u \in \Ss$, there exists an embedding $\sigma : \K \hookrightarrow \C$ such that $|\sigma(u)| \geq a >1$, for some real $a$.   Hence, for $k\in \Z_{>0}$, we get $|\sigma(u^k)|\geq a^k$, and then $|\sigma(u^k)-\zeta^{j}| \geq a^k-1$.

Let us choose an another embedding $\tau$. We want to give some "good" lower bound for $|\tau(u^k)-\zeta^j|$.
As $u\in \Ss$ there is only two situations.

\medskip

$\bullet$  If $|\tau(u)| <1$, then clearly for sufficiently  large $k$, we get $$|\tau(u^k)- \zeta^j|\geq 1-|\tau(u^k)|\geq \frac{1}{2}.$$

$\bullet$ If $|\tau(u)| >1$, for sufficiently large $k$, we get $|\tau(u^k)-\zeta^j| \geq  1$.

\medskip

Putting all of this together, we obtain
$$\N(\Phi_n(u^{k}))=\prod_{i=1}^m\prod_{j\in T_n}|\sigma_i(u^k) - \zeta^j|\geq \left((a^k-1)2^{-m+1}\right)^{\varphi(n)},$$
Consequently, by taking  sufficiently large $k$, we get that  for every $n$ with $\varphi(n)\geq \frac{1}{2} n$
$$\N(\Phi_n(u^{k}))\geq \exp(c n),$$ where the  $\sigma_i$'s are the embeddings of $\K$ in $\C$ and where 
 $c>0$ is some constant (depending on $u$, $k$ and $m$).
\end{proof}

\

Suppose now that $u \in E_\K$ is such that $$|\N(\Phi_{n}(u))|\geq \exp(c n),$$
for every $n$ such that $\varphi(n)\geq \frac{1}{2} n$ 
(which is always possible by Lemma \ref{lemma_5}).
 
Let us write $(u^n-1)=I_n J_n$, with $I_n$ and $J_n$ relatively prime and where if $\p |I_n$, then $\p^2 \nmid I_n$, and if $\p |J_n$ then $\p^2 |J_n$.
Then, if we write $u^n-1+1=u^n$, the generalized $abc$-conjecture implies that
$$|\N(u^n-1)| \ll_{\K,\varepsilon} \Rad(I_n J_n)^{1+\varepsilon} \ll_{\K,\varepsilon} \left(\N(I_n) \N(J_n)^{1/2}\right)^{1+\varepsilon}.$$
Hence, as $|\N(u^n-1)|=\N(I_n)N(J_n)$, we get
$$\N(J_n)^{1/2}\ll_{\K,\varepsilon} \N(I_n)^{\varepsilon}\N(J_n)^{\varepsilon/2} \ll_{\K,\varepsilon} |\N(u^n-1)|^{\varepsilon}, $$
and then $$\N(J_n) \ll_{\K,\varepsilon} |\N(u^n-1)|^{2 \varepsilon}.$$
Now let us  also write $(\Phi_n(u))=A_nB_n$, with $A_n$ and $B_n$ relatively prime and  where if $\p |A_n$, then $\p^2 \nmid A_n$, and if $\p |B_n$ then $\p^2 |B_n$. Of course, $B_n | J_n$, and then 
$$
 \N(B_n) \ll_{\K, \varepsilon} |\N(u^n-1)|^{2 \varepsilon}.
$$
Choose $\beta > 1$ such that $|\sigma_i(u)| \leq \beta$ for all $i$. Then
$$|\N(u^n-1)|\leq \prod_{i=1}^m(|\sigma_i(u)|^n+1)\leq 2^m(\beta^m)^n,$$
which implies $$\N(B_n) \ll_{\K, \varepsilon} 2^{2m \varepsilon}(\beta^{m})^{2n\varepsilon}.$$
Hence, $$\N(A_n)= \N(\Phi_n(u))/\N(B_n) \gg_{\K, \varepsilon}  \exp(n(c-2m\varepsilon \log \beta)).$$

We finally obtain:
\begin{prop} \label{prop2.2} If the generalized $abc$-conjecture holds 
 then for all $\varepsilon >0$, one has
 $$\N(A_n) \gg_{\K,\varepsilon}   \exp(n(c-2m\varepsilon \log \beta)),$$
 for every $n$ such that $\varphi(n) \geq \frac{1}{2} n$. 
\end{prop}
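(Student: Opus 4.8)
The plan is to follow the chain of estimates laid out immediately before the statement, organizing them into a single clean argument. We start from a unit $u \in E_\K$ for which $|\N(\Phi_n(u))| \geq \exp(cn)$ for all $n$ with $\varphi(n) \geq \tfrac12 n$; such a $u$ exists by Lemma \ref{lemma_5} (after replacing $u$ by a suitable power $u^k$, which only changes the constant $c$). The first step is the factorization bookkeeping: write $(u^n - 1) = I_n J_n$ with $I_n$ squarefree, $J_n$ "powerful" (every $\p \mid J_n$ satisfies $\p^2 \mid J_n$), and $(I_n, J_n) = 1$; similarly $(\Phi_n(u)) = A_n B_n$. Since $\Phi_n(u) \mid u^n - 1$, we get $B_n \mid J_n$.

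The second step applies the generalized $abc$-conjecture to the identity $(u^n - 1) + 1 = u^n$ with $a = u^n - 1$, $b = 1$, $c = u^n$ (note $(a,b) = 1$ trivially). Using $\Rad((u^n-1)) = \Rad(I_n J_n) \ll \N(I_n)\,\N(J_n)^{1/2}$ and $\prod_v \max\{|a|_v,|b|_v,|c|_v\} \geq |\N(u^n-1)|$, we obtain $|\N(u^n-1)| \ll_{\K,\varepsilon} \bigl(\N(I_n)\,\N(J_n)^{1/2}\bigr)^{1+\varepsilon}$. Combined with $|\N(u^n-1)| = \N(I_n)\,\N(J_n)$, a short manipulation isolates $\N(J_n)^{1/2} \ll_{\K,\varepsilon} |\N(u^n-1)|^{\varepsilon}$, hence $\N(J_n) \ll_{\K,\varepsilon} |\N(u^n-1)|^{2\varepsilon}$, and a fortiori $\N(B_n) \ll_{\K,\varepsilon} |\N(u^n-1)|^{2\varepsilon}$ since $B_n \mid J_n$.

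The third step is the elementary upper bound $|\N(u^n - 1)| \leq \prod_{i=1}^m (|\sigma_i(u)|^n + 1) \leq 2^m (\beta^m)^n$, where $\beta > 1$ dominates all archimedean absolute values of $u$; this gives $\N(B_n) \ll_{\K,\varepsilon} 2^{2m\varepsilon} (\beta^m)^{2n\varepsilon}$. Finally, dividing the lower bound $\N(A_n)\N(B_n) = |\N(\Phi_n(u))| \geq \exp(cn)$ by this upper bound for $\N(B_n)$ yields
\[
\N(A_n) = \frac{\N(\Phi_n(u))}{\N(B_n)} \gg_{\K,\varepsilon} \exp\bigl(n(c - 2m\varepsilon \log \beta)\bigr),
\]
valid for every $n$ with $\varphi(n) \geq \tfrac12 n$, which is the claim. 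There is no serious obstacle here: the proposition is essentially a repackaging of the computation already displayed, and the only point requiring a little care is keeping the implicit constants genuinely independent of $n$ throughout, in particular making sure the $abc$ constant $C_{\K,\varepsilon}$ is absorbed cleanly into the $\ll_{\K,\varepsilon}$ notation and that the passage from $\N(J_n)$ to $\N(B_n)$ uses only the divisibility $B_n \mid J_n$ and not any finer information. The real work of the paper lies in the next step — choosing $\varepsilon$ small enough that $c - 2m\varepsilon \log \beta > 0$ and then extracting primes $p \mid \N(A_n)$ with $p$ large and $p \nmid \#\G$ — but that is carried out after this proposition, not within it.
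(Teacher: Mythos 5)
Your proposal is correct and follows essentially the same route as the paper: the paper's own ``proof'' of Proposition \ref{prop2.2} is precisely the displayed chain of estimates you reorganize (the $I_nJ_n$ and $A_nB_n$ decompositions, the application of $abc$ to $u^n-1+1=u^n$ giving $\N(J_n)\ll_{\K,\varepsilon}|\N(u^n-1)|^{2\varepsilon}$, the divisibility $B_n\mid J_n$, the archimedean bound $|\N(u^n-1)|\leq 2^m\beta^{mn}$, and the final division by $\N(B_n)$). Your added care about the uniformity of the implicit constants and the reduction to a power $u^k$ via Lemma \ref{lemma_5} matches the paper's intent.
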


Take now $\varepsilon >0$ such that $\displaystyle{\varepsilon < \frac{c}{2m\log(\beta)}}$. Thanks  to Proposition \ref{prop2.2}, there exists $n_0 \in \Z_{>0}$ such that for all $n\geq n_0$, with $\varphi(n) \geq \frac{1}{2}n$, then $\N(A_n) > n^{m}$, where we recall that $m=[\K:\Q]$.  
Then, for each such~$n$, there exists  a prime ideal~$\p_n \subset \O_K$, dividing $A_n$ but not~$n$: indeed  if it was not the case then as $A_n$ is square free,  $A_n$ would divide $n$, which contradicts $\N(A_n) > n^m$.
Observe that $\p_n | (u^n-1)$ implies  $\N(\p_n) \leq 2^m \beta^{mn}$.

As $\p_n \nmid n$, the polynomial $X^n-1$ is separable over $\O_K/\p_n$. Thus $u$ is a simple root of $X^n-1=\prod_{d| n}\Phi_d(X)$ modulo $\p_n$ and, as $\p_n$ divides $\Phi_n(u)$, its order in $\left(\O_\K/\p_n\right)^\times$ is exactly $n$. Furthermore, $\p_n$ is a divisor of $A_n$, so $\p_n ^2$ does not divide $u^n-1$ (in other words $\p_n | I_n$). 

Let $p_n$ be the prime number such that $p_n \Z=\p_n \cap \Z$. 

In conclusion, we obtain:

\begin{prop} \label{prop_key}
Take $u\in E_\K$ as before. For each $n\geq n_0$ such that  $\varphi(n) \geq \frac{1}{2} n$,  there exists a prime ideal $\p_n \subset \O_\K$ such that 
\begin{itemize}
\item[$(i)$] $\p_n | \Phi_n(u)$ and $u^n\nequiv 1 (\mod \ \p_n^2)$,
\item[$(ii)$] $u$ is of order $n$ in $\left(\O_\K/\p_n\right)^\times$,
\item[$(iii)$] $\N(\p_n) \leq  \gamma^n$, for some $\gamma$ depending only on $\K$.
\end{itemize}
\end{prop}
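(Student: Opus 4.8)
The plan is to convert the exponential lower bound of Proposition~\ref{prop2.2} into the desired prime ideal by purely elementary divisibility and separability arguments: all the genuine difficulty (the $abc$-input, via Lemma~\ref{lemma_5}) is already packaged in that proposition, and what remains is bookkeeping with the factorisations $(u^n-1)=I_nJ_n$ and $(\Phi_n(u))=A_nB_n$ introduced above. Concretely, I would first fix $\varepsilon>0$ small enough that $\delta:=c-2m\varepsilon\log\beta>0$ (possible since $\beta>1$); then Proposition~\ref{prop2.2} gives $\N(A_n)\gg_{\K}\exp(\delta n)$ whenever $\varphi(n)\geq\frac12 n$, and since $n^m=\exp(m\log n)$ grows only subexponentially there is an $n_0$ with $\N(A_n)>n^m$ for every such $n\geq n_0$. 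Fix one such $n$.

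Next I would extract $\p_n$. If every prime ideal of $\O_\K$ dividing $A_n$ also divided $n$, then $A_n$, being squarefree by construction, would divide the product of the prime ideals above the rational primes dividing $n$; that product divides $n\O_\K$, so $\N(A_n)\leq n^m$, contradicting the choice of $n$. Hence there is a prime ideal $\p_n\mid A_n$ with $\p_n\nmid n$. Since $\p_n\mid A_n\mid\Phi_n(u)$, the first assertion of $(i)$ is immediate; and since $\Phi_n(u)\mid u^n-1$ we get $\N(\p_n)\leq|\N(u^n-1)|\leq\prod_{i=1}^m\bigl(|\sigma_i(u)|^n+1\bigr)\leq 2^m\beta^{mn}$, which is $(iii)$ with, say, $\gamma:=2\beta^m$ (enlarging $\gamma$ for the finitely many small $n$ if need be).

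The remaining points come from separability. Because $\p_n\nmid n$, the polynomial $X^n-1$ is separable over the residue field $\O_\K/\p_n$, so the reductions $\Phi_d(X)\bmod\p_n$ for $d\mid n$ are squarefree and pairwise coprime; hence $u\bmod\p_n$ is a root of at most one of them. As $\p_n\mid\Phi_n(u)$, it divides no $\Phi_d(u)$ with $d\mid n$, $d\neq n$, so $\p_n$ divides $u^n-1=\prod_{d\mid n}\Phi_d(u)$ to exactly the order to which it divides $\Phi_n(u)$, namely once (using $\p_n\mid A_n$ and $A_n$ squarefree); this gives $u^n\not\equiv 1\pmod{\p_n^2}$, the second assertion of $(i)$ (equivalently $\p_n\mid I_n$). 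For $(ii)$: the order $d$ of $u$ in $(\O_\K/\p_n)^\times$ divides $n$ since $\p_n\mid u^n-1$, and if $d<n$ then $\p_n\mid u^d-1=\prod_{e\mid d}\Phi_e(u)$ would force $\p_n\mid\Phi_e(u)$ for some $e\mid d$, again contradicting that $\Phi_n$ is the unique cyclotomic factor killed by $\p_n$; hence $d=n$.

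The real obstacle — and the only place the $abc$-conjecture is used — is Proposition~\ref{prop2.2}, which keeps the squarefree part $A_n$ of $\Phi_n(u)$ exponentially large and thereby forces a prime $\p_n\mid A_n$ away from $n$, hence with $u^n\not\equiv 1\pmod{\p_n^2}$. Granting that input the argument above is routine; the one point deserving care is that $\p_n\mid A_n$ controls only the $\p_n$-valuation of $\Phi_n(u)$, so one genuinely needs the separability of $X^n-1$ modulo $\p_n$ to transfer this to $u^n-1$ and to pin down the order of $u$.
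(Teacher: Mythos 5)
Your proposal is correct and follows essentially the same route as the paper: fix $\varepsilon$ with $c-2m\varepsilon\log\beta>0$, use Proposition~\ref{prop2.2} to get $\N(A_n)>n^m$ for $n\geq n_0$, extract a prime $\p_n\mid A_n$ with $\p_n\nmid n$ from squarefreeness of $A_n$, and use separability of $X^n-1$ modulo $\p_n$ to get the exact order $n$ and $u^n\not\equiv 1\pmod{\p_n^2}$, with the norm bound coming from $\p_n\mid u^n-1$. Your write-up is in fact slightly more careful than the paper's at the step transferring $\p_n\mid A_n$ (which only controls the $\p_n$-valuation of $\Phi_n(u)$) to the statement $\p_n^2\nmid u^n-1$, which indeed requires the separability argument you spell out.
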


By $(ii)$ of Proposition \ref{prop_key}, it follows that $\p_n=\p_{n'}$ if and only if $n=n'$.
 Observe that a set of primes $\p_n$ of size $Y$ gives at least $Y/m$ primes~$p_n$.
 
Now given $X \geq 1$,  let $n_1$ be the largest integer such that $\gamma^{n_1} \leq X$. Assume $X$ sufficiently large to ensure  $n_0 \leq n_1$. Then, for each $n \in [n_0, n_1]$ 
such that $\varphi(n)\geq \frac{1}{2}n$,
there exists a prime ideal $\p_n \subset \O_\K$   for which $u^n\equiv 1 (\mod \ \p_n)$ and $u^n\nequiv 1 (\mod \ \p_n^2)$.  Note that $p_n \leq \N(\p_n) \leq \gamma^n \leq \gamma^{n_1}\leq  X$.
Thereby:
\begin{eqnarray*}
&&\frac{1}{m} \#\{n, \ n_0\leq n\leqslant n_1 , \ \varphi(n)\geq \frac{1}{2} n\}\\
&&\leq \#\{p_n \leq X, p_n \text{ prime }|\ \exists~ \p_n\in \O_k, \p_n|p_n , u^n\equiv 1 (\mod \ \p_n) \text{ and } u^n\nequiv 1 (\mod \ \p_n^2)\}.
\end{eqnarray*}

In conclusion, one has found at least  $c \ \log X$  prime numbers $p_n \leq X$ satisfying  $(i)$ of Proposition \ref{prop_key}  for some $\p_n|p_n$.

\subsection{} {\it Proof of Theorem \ref{theo_main}}. 
Let $\chi$ be an irreducible $\Q$-character of $\G$ such that there exists some $u\in E_\K^\chi \cap \Ss$.
By the previous section, there exists $k\geq 1$ such that $u^{kn}\equiv 1 (\mod \ \p_n)$ and $u^{kn}\nequiv 1 (\mod \ \p_n^2)$ for at least $c\ \log X$ prime numbers  $p_n \leq X$ (where $\p_n|p_n$). We conclude with Lemma \ref{lemma1.2} (after forgetting the prime numbers smaller than $p_0$).

\medskip

{\it Proof of the Corollary}.

Observe first that, in the two cases, the Leopoldt conjecture holds and the field $\K$ contains some unit in $\Ss$ (see Remark \ref{rema_Pisot}). Take $p>p_0$.
The choice of the character is the following : if $\K$ is real quadratic, let $\chi=\psi$ be  the  nontrivial character of~$\G$ ; if $\K/\Q$ is an imaginary $S_3$-extension, let $\chi$ be the irreducible $\Q$-character of $\G$ of degree $2$ (observe that  $\chi=\psi$ is also $\Q_p$-irreducible).
Then  $\Q\otimes E_\K =E_\K^\chi$,   $r_\psi(E_\K)=1$, and $\T_p=\T_p^\psi$. Therefore by Theorem \ref{theo_main},  $\T_p=\{1\}$ for at least $c\ \log X$ prime numbers $p\leq X$.


\end{document}